\documentclass[leqno]{amsart}

\usepackage{amsmath}
\usepackage{amsfonts}
\usepackage{amssymb}
\usepackage{amsthm}
\usepackage{mathrsfs}
\usepackage{hyperref}
\usepackage{enumerate}
\usepackage{cleveref}
\usepackage{calc}

%%%%%%%%%%%  importing graphics %%%%%%%%%%%%%%%%%%%%%%%%%%%%%%%%
% Note to used arithmetic make sure to include \usepackage{calc}
\usepackage{graphicx} %importing graphic
%%%%%%%%%%%%%%%%%%%%%%%%%%%%%%%%%%%%%%%%%%%%%%%%%%%%%%%%%%%%%%%%
%%%%%%%%   using picture env to lable around (x,y)      %%%%%%%%
%%% 
%%% \lab__[dist]{label}  labels on sq side 2*dist 
%%%   
%%%    dist is 12pt if not given
%%%    __ is one of the compass letters
%%%
%%%    NW-------N-------NE
%%%     |               |
%%%     |     (x,y)     |
%%%     W       +       E
%%%     |               |
%%%     |               |
%%%    SW-------S-------SE
%%%    
%%%     |-dist--|--dist-| 
%%%
%%% the command look like:
%%%
%%%  \put(x,y){\lab__[dist]{label}}
%%%

\newcommand\labN[2][12pt]{\makebox(0,0)[cb]{\raisebox{#1 -3pt + .5\totalheight  }{\mbox{#2}}}}

\newcommand\labS[2][12pt]{\makebox(0,0)[ct]{\raisebox{-#1-\height}{\mbox{#2}}}}

\newcommand\labnhS[2][3pt]{\labS[#1]{#2}}  

%%%%%%%%%%%%%%%%%%%%%%%%%%%%%%%%%%%%%%%%%%%%%%%%%%%%%%%%%%%%%%%%

\newtheorem{theorem}{Theorem}
\newtheorem*{theorem*}{Theorem}

\newtheorem*{lemma*}{Lemma}

\theoremstyle{definition}

\newtheorem*{remark*}{Remark}

\theoremstyle{plain}

\newcommand{\Z}{\mathbb{Z}}

\newcommand{\Q}{\mathbb{Q}}
\newcommand{\R}{\mathbb{R}}
\newcommand{\C}{\mathbb{C}}

\newcommand{{\D}}{\delta}

\newcommand{\eps}{\varepsilon}

\newcommand{\SL}{\operatorname{SL}}

\newcommand{\PSL}{\operatorname{PSL}}

\newcommand{\sign}{\operatorname{sgn}}
\renewcommand{\Re}{\operatorname{Re}}
\renewcommand{\Im}{\operatorname{Im}}

\newcommand{\SLZ}{\SL_2(\Z)}
\newcommand{\abcd}{\left(\begin{smallmatrix} a & b \\ c & d \end{smallmatrix}\right)}
\newcommand{\bigabcd}{\begin{pmatrix} a & b \\ c & d \end{pmatrix}}

\newcommand{\HH}{\mathfrak{H}}

\newcommand{\abs}[1]{\left\vert#1\right\vert}

\numberwithin{equation}{section}
\numberwithin{table}{section}

\author{Michael H. Mertens}
\address{Universit\"at zu K\"oln, Department Mathematik/Informatik, Weyertal 86--90, 50931 K\"oln, Germany} 
\email{mmertens@math.uni-koeln.de}
\author{Mark A. Norfleet}
\address{University of Wisconsin--Green Bay} 
\email{norfleem@uwgb.edu}

\title[A multiplier system for $\Gamma_0^+(p)$ and a geometric formulation]{Weight $1/2$ multiplier systems for the group $\Gamma_0^+(p)$ and a geometric formulation}

\begin{document}
\maketitle

\begin{abstract}
We construct a weight $1/2$ multiplier system for the group $\Gamma_0^+(p)$, the normalizer of the congruence subgroup $\Gamma_0(p)$ where $p$ is an odd prime, and we define an analogue of the eta function and Rademacher symbol and relate it to the geometry of edge paths in a triangulation of the upper half plane.
\end{abstract}

%%%%%%%%%%%%%%%%%%%%%%%%%%%%%%%%%%%%%%%
\section{Introduction}
%%%%%%%%%%%%%%%%%%%%%%%%%%%%%%%%%%%%%%%

A very important modular form is given by the famous \emph{Dedekind eta function}, which is usually defined as the infinite product 
\[
\eta(z):=q^{1/24}\prod_{n=1}^\infty (1-q^n), \qquad q= e^{2 \pi i z},
\]
for $z$ in the upper half-plane $\HH=\{z\in\C\: :\: \Im(z)>0\}$. This function enters into mathematics in many different places, one of which is the famous \emph{first Kronecker limit formula} (see e.g. \cite[Chapter 1]{Siegel61}): The \emph{Eisenstein series}
\[
E(z,s):=\sum_{\substack{m,n\in\Z \\ (m,n)\neq 0}} \frac{\Im(z)}{|mz+n|^{2s}},\quad \Re(s)>1,
\]
has a meromorphic continuation to the entire $s$-plane which is holomorphic except for a simple pole in $s=1$ and one has
\[
\lim_{s\to 1}\left(E(z,s)-\frac{\pi}{s-1}\right)=2\pi\left(\gamma_E-\log(2)-\log\left(\sqrt{\Im(z)}|\eta(z)|^2\right)\right),
\]
where $\gamma_E=0.57721566...$ denotes the Euler-Mascheroni constant. 

Since the Eisenstein series is invariant under \emph{M\"obius transformations} $z\mapsto \abcd.z:=\frac{az+b}{cz+d}$ with $\abcd\in \SLZ$, Kronecker's limit formula allows to deduce the well-known fact that $\eta$ satisfies the transformation law
\[
\eta\left(\frac{a z+b}{cz+d}\right)=\eps(a,b,c,d)(cz+d)^{1/2}\eta(z),\quad \abcd\in\SLZ,
\]
for some $\eps(a,b,c,d)\in\C$. In particular, we find
\[
\eta(z+1) = e^\frac{\pi i}{12}\eta(z),\qquad\qquad
\eta\left(-\frac 1z\right) = \sqrt{\frac{z}{i}}\eta(z),
\]
where we choose the branch of the square-root that is positive for positive real arguments.  Since these transformations generate the full modular group, this implies that $\eta$ is a modular form of weight $1/2$ for the group $\SLZ$ with respect to a certain multiplier system (see below). Dedekind was the first to obtain an explicit description of this multiplier system in terms of so-called \emph{Dedekind sums}: For coprime integers $h,k$, we let
\[
s(h,k)=\sum_{\mu=1}^k \left(\hspace{-4pt}\left(\frac{h\mu}{k}\right)\hspace{-4pt}\right)\left(\hspace{-4pt}\left(\frac\mu k\right)\hspace{-4pt}\right), %
\quad \text{ where } \quad %
(\hspace{-1pt}(x)\hspace{-1pt}):=%
\begin{cases}%
x-\lfloor x\rfloor -\frac 12 & \text{if }x\notin\Z ,\\%
0 & \text{if }x\in\Z.%
\end{cases}%
\]
With those we define the \emph{Rademacher symbol} for a matrix $\gamma=\abcd\in\SLZ$ to be
\[
\Phi(\gamma):=%
\begin{cases}%
\frac{b}{d} & \text{if }c=0, \\%
\frac{a+d}{c}-12\sign(c)s(d,|c|)  & \text{if }c\neq 0.%
\end{cases}%
\]
It can be shown that $\Phi(\gamma)$ is always an integer, see for instance p. 50 in \cite{RadGros}. With this we have for $\abcd\in\SLZ$ that
\begin{equation}\label{etatrans}
\log \eta\left(\frac{az+b}{cz+d}\right)%
=%
\log\eta(z)%
+ \frac{1}{2}\sign(c)^2 \log\left(\frac{cz+d}{i\sign(c)}\right)%
+ \frac{\pi i}{12}\Phi\left(\bigabcd\right),
\end{equation}
where the second summand is understood to be $0$ if $c=0$.  Note that throughout we understand $\sign(0):=0$ The Rademacher symbol almost behaves like a logarithm on $\SLZ$,
\[
\Phi(\gamma_1\gamma_2) = \Phi(\gamma_1) + \Phi(\gamma_2) - 3\sign(c_1c_2c_3),
\]
where $\gamma_3=\gamma_1\gamma_2$ and %
$\gamma_j = \left(\begin{smallmatrix} a_j & b_j \\ c_j & d_j \end{smallmatrix}\right)$, %
which tells us that the Rademacher symbol is essentially the logarithm of a \emph{multiplier system} of weight $1/2$ for the full modular group. For an exact definition of this term we refer the reader to standard textbooks on the theory of modular forms, e.g. Section 2.6 in \cite{Iwaniec}. 

These Dedekind sums (also Rademacher symbols) are therefore very natural objects connected to the group $\SLZ$. They have appeared in many different contexts in number theory, but also in geometry and topology. For example, Dedekind sums are present in signature related invariants of lens spaces, such as their $\alpha$-invariants in \cite{HirzebruchZagier74} and $\mu$-invariants in \cite{NeumannRaymond78}; they also appear in the study of signatures of torus bundles over surfaces (see \cite{Meyer73}) and generalized Casson invariant (see \cite{Walker92}). Furthermore, Kirby and Melvin in \cite{KirbyMelvin94} give a geometric definition of Rademacher symbols. Their geometric definition of the Rademacher symbol arises from the action of the modular group on the upper half plane $\HH$ by fractional linear transformations.  Their geometric definition is based on a certain triangulation $K$ of $\HH$ by ideal triangles obtained by successive reflections of the ideal triangle with vertices at $0$, $1$, and $\infty$ on the boundary of $\HH$. There are many special properties of this triangulation $K$; in \cite{HatcherBook}, this triangulation is used to elucidate a geometric view of Pythagorean triples, the Euclidean algorithm, Pell’s equation, continued fractions, and Farey sequences. 

In section \ref{TriInHyperbolicPlane}, we recall how elements of the modular group can be related to edges in the triangulation $K$ by the well known fact that the edges of the triangulation have endpoints $a/c$ and $b/d$ if and only if $ad-bc = \pm 1$. Next, starting with the edge from $\infty$ to $0$, we associate a (directed) based edge path $\alpha$ in the triangulation $K$ that ends at the edge related to $\abcd$ in the modular group.  With these based edge paths in the triangulation $K$ of $\HH$, Kirby and Melvin give a geometric formulation of Rademacher symbol \cite{KirbyMelvin94}. 

\begin{theorem*}[Kirby \& Melvin]
For $A= \left( \begin{smallmatrix} a &  b \\ c & d \end{smallmatrix} \right) = S(T^{a_1} S) \cdots (T^{a_k} S)$ in the modular group with generators $S = \left( \begin{smallmatrix} 0 &  -1 \\ 1 & 0 \end{smallmatrix} \right)$ and $T =  \left( \begin{smallmatrix} 1 &  1 \\ 0 & 1 \end{smallmatrix} \right)$, and set $\alpha = \left( a_1 , \ldots , a_k \right)$,  we have the identity
\[
\Phi(A) = \tau_\alpha - 3 \sigma_\alpha,
\]
where $\tau_\alpha$ and $\sigma_\alpha$ denotes the trace and signature of the matrix
\[
\left(\begin{smallmatrix}
a_1   &   1   &        & & &        &        \\
1     &   a_2 &   1    & & &        &        \\
      &   1   & \ddots & & &        &       \\
      &       &        & & &        &       \\
      &       &        & & &\ddots  &    1   \\
      &       &        & & &  1     &   a_k  
\end{smallmatrix}\right)
\]
whose $(i,j)^{\text{th}}$ entry is $a_i$ if $i = j$, $1$ if $\abs{i-j}=1$, and $0$ otherwise. 
\end{theorem*}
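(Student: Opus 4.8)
The plan is to reduce the statement to an identity about leading principal minors via the cocycle relation for $\Phi$, and then to read off the signature through the Jacobi--Sylvester criterion. First I would record the values of $\Phi$ on the relevant matrices. Since $S=\left(\begin{smallmatrix}0&-1\\1&0\end{smallmatrix}\right)$ has $c=1$, $a+d=0$ and the Dedekind sum $s(0,1)=0$, we get $\Phi(S)=0$; likewise $T^{a}S=\left(\begin{smallmatrix}a&-1\\1&0\end{smallmatrix}\right)$ has $c=1$, $a+d=a$ and $s(0,1)=0$, so $\Phi(T^{a}S)=a$. Now set $P_0=S$ and $P_j=P_{j-1}(T^{a_j}S)$, so that $P_k=A$, and write $c_j,d_j$ for the lower-left and lower-right entries of $P_j$. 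Applying the cocycle relation $\Phi(\gamma_1\gamma_2)=\Phi(\gamma_1)+\Phi(\gamma_2)-3\sign(c_1c_2c_3)$ to each factorization $P_j=P_{j-1}(T^{a_j}S)$, and using that the lower-left entry of $T^{a_j}S$ equals $1$, each correction collapses to $\sign(c_{j-1}c_j)$, and telescoping gives
\[
\Phi(A)=\Phi(S)+\sum_{j=1}^k\Phi(T^{a_j}S)-3\sum_{j=1}^k\sign(c_{j-1}c_j)=\sum_{j=1}^k a_j-3\sum_{j=1}^k\sign(c_{j-1}c_j).
\]
Since $\tau_\alpha=a_1+\cdots+a_k$ is exactly the trace of the tridiagonal matrix, the first sum already produces $\tau_\alpha$, and it remains only to identify $\sum_j\sign(c_{j-1}c_j)$ with $\sigma_\alpha$.

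Second, I would show that the $c_j$ are precisely the leading principal minors $D_j$ of the tridiagonal matrix. Multiplying out $P_j=P_{j-1}\left(\begin{smallmatrix}a_j&-1\\1&0\end{smallmatrix}\right)$ yields $c_j=a_jc_{j-1}+d_{j-1}$ and $d_j=-c_{j-1}$, hence $c_j=a_jc_{j-1}-c_{j-2}$, with $c_0=1$ and $c_1=a_1$. This is exactly the continuant recurrence $D_j=a_jD_{j-1}-D_{j-2}$ (obtained by expanding the determinant of the tridiagonal matrix along its last row and column) with identical initial data, so $c_j=D_j$ for all $j$.

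Third, I would invoke the Jacobi--Sylvester criterion: for a real symmetric matrix whose leading principal minors $D_0=1,D_1,\dots,D_k$ are all nonzero, the signature equals the number of sign agreements minus the number of sign changes in this sequence, i.e. $\sigma_\alpha=\sum_{j=1}^k\sign(D_{j-1}D_j)$. Combined with $c_j=D_j$, this delivers $\Phi(A)=\tau_\alpha-3\sigma_\alpha$, and the trace part is automatic.

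The main obstacle is the case of vanishing minors, where the naive criterion breaks down and the convention $\sign(0)=0$ must be justified. Here the continuant structure is what saves the day: if $D_{j-1}=D_j=0$ then $D_{j-2}=0$, and inductively $D_0=0$, contradicting $D_0=1$, so no two consecutive minors vanish; moreover at an isolated zero $D_j=0$ the recurrence forces $D_{j+1}=-D_{j-1}$, so the minors flanking a zero have opposite signs. I would use these two facts to run a perturbation of the diagonal entries that makes all minors nonzero, apply the criterion there, and pass to the limit: the opposite-sign property makes the two terms flanking each interior zero cancel, while a trailing zero is matched against the unique (the off-diagonals being nonzero forces a simple spectrum) zero eigenvalue of the matrix via Cauchy interlacing. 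This careful bookkeeping at the vanishing minors---equivalently, an appeal to Gantmacher's generalized form of Jacobi's rule---is the delicate point; everything else is the formal cocycle computation above.
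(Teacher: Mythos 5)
Your proposal is correct in its essentials, but there is nothing in the paper to compare it against: the statement is quoted as background from Kirby and Melvin \cite{KirbyMelvin94} and is used in Section~\ref{ProofGeomThm} purely as a black box, so the paper contains no proof of it. Judged on its own merits, your argument is sound and is in the same spirit as the original one (Rademacher's composition law plus continued-fraction/continuant bookkeeping). The computations $\Phi(S)=0$ and $\Phi(T^aS)=a$ are right; telescoping $\Phi(\gamma_1\gamma_2)=\Phi(\gamma_1)+\Phi(\gamma_2)-3\sign(c_1c_2c_3)$ over $P_j=P_{j-1}(T^{a_j}S)$ correctly yields $\Phi(A)=\sum_j a_j-3\sum_j\sign(c_{j-1}c_j)$; and the identification of the lower-left entries $c_j$ with the leading principal minors $D_j$ of $M_\alpha$ via the common recurrence $x_j=a_jx_{j-1}-x_{j-2}$, $x_0=1$, $x_1=a_1$, is exactly right. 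The only delicate point is the one you flagged: vanishing minors. Your two structural lemmas are correct and sufficient --- consecutive zeros would propagate down to $D_0=0$, a contradiction, and an isolated zero forces $D_{j+1}=-D_{j-1}$, so in a perturbation $M_\alpha+\eps I$ the two terms flanking an interior zero cancel, matching the convention $\sign(0)=0$; for a trailing zero, simplicity of the spectrum of a symmetric tridiagonal matrix with nonzero off-diagonal entries together with Cauchy interlacing gives $\sign\bigl(D_{k-1}(\eps)D_k(\eps)\bigr)=+1$, which matches the unit jump of the signature when the simple zero eigenvalue is pushed positive. (Small sanity checks confirm the conventions: $\alpha=(0,0)$ gives minors $1,0,-1$, signum sum $0$, signature $0$; $\alpha=(1,1)$ gives $1,1,0$, sum $1$, signature $1$.) One small point worth adding: the factorization $A=S(T^{a_1}S)\cdots(T^{a_k}S)$ coming from an edge path is an identity in $\PSL_2(\Z)$ (see the Lemma in Section~\ref{TriInHyperbolicPlane}), so if $A$ only equals the product up to sign you also need $\Phi(-\gamma)=\Phi(\gamma)$, which follows from the oddness of Dedekind sums in their first argument; with that remark, your argument is complete.
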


Both $\tau_\alpha$ and $\sigma_\alpha$ have geometric interpretations in terms of the based edge path $\alpha$ in the triangulation $K$. Informally speaking, $\tau_\alpha$ is the net number of turns made in $\alpha$, keeping in mind the direction, and $\sigma_\alpha$ is the difference between the number of edges of $\alpha$ pointing left and those pointing right (see \cite[Remark 1.13(c)]{KirbyMelvin94}).  

In fact, Kirby and Melvin use this geometric formulation to illuminate the appearance of Dedekind sums (and Rademacher symbols) in topology; moreover, the connection of Rademacher symbols with linking numbers of trefoil knots \cite{Ghys07} has been generalized from the modular point of view by Duke, Imamoglu, and T\'{o}th \cite{DIT17}. 

On the number theoretic side, the Kronecker limit formula has been generalized to arbitrary Fuchsian groups of the first kind (i.e.  discrete subgroups of $\SL_2(\R)$ of finite covolume); moreover, the corresponding analogue of the Dedekind eta function gives rise to analogous definitions of Dedekind sums and Rademacher symbols for such groups, which are made explicit for the special case of the principal congruence subgroups in \cite{Goldstein73}. Furthermore, arithmetic and analytic properties of these generalizations have been studied for instance in \cite{Burrin17,Burrin18,Vardi87,Vardi93} (see also \cite{Burrin19} for a survey of these results).

The current work generalizes the aforementioned work of Kirby and Melvin to a wider class of arithmetic groups, namely the groups $\Gamma_0^+(p)$ for odd primes $p$, the normalizers in $\SL_2(\R)$ of the congruence subgroups 
\[
\Gamma_0(p) = \left\{\bigabcd\in\SLZ \: :\: c\equiv 0\pmod p\right\}.
\]

To this end, we introduce a Rademacher symbol for $\Gamma_0^+(p)$ and show the following result.
\begin{theorem} \label{thmPhiP}
For $\gamma=\abcd\in\Gamma_0^+(p)$, let
\[
\Phi_p(\gamma):=
\begin{cases} 
\frac{1}{2} \left[ \Phi(\gamma) + \Phi \left( %
\left( \begin{smallmatrix} %
a   & pb \\ %
c/p & d
\end{smallmatrix} \right) %
\right) \right] & \text{for }\gamma\in\Gamma_0(p),\\
\Phi_p \left( %
\frac{1}{\sqrt{p}} \left( \begin{smallmatrix} %
  c  & d \\ %
 -pa & -pb %
\end{smallmatrix} \right) %
\right) - 3\sign(-ac) & \text{for }\gamma\notin\Gamma_0(p),
\end{cases}
\]
and %
\[
\eta_p(z):=\left( \eta(z)^k \eta( p z)^k \right)^{1/2k},
\]
for a suitable positive integer $k$ (see Section~\ref{Symbolp}). Then we have for all $\gamma=\abcd\in\Gamma_0^+(p)$ and all $z\in\HH$ that
\[
\log \eta_p\left(\frac{az+b}{cz+d}\right) = %
\log\eta_p(z) + \frac 12\sign(c)^2\log\left(\frac{cz+d}{i\sign(c)}\right) + \frac{\pi i}{12}\Phi_p(\gamma),
\]
where the second term is to be interpreted as $0$ if $c=0$ as in \eqref{etatrans}.
\end{theorem}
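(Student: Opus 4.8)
The plan is to treat the two cases in the definition of $\Phi_p$ separately and to reduce everything to the classical law \eqref{etatrans} for $\eta$ together with the behaviour of $\eta_p$ under the Fricke involution. First I would observe that at the level of logarithms the auxiliary exponent $k$ is irrelevant: since $\log\eta_p(z) = \frac{1}{2k}\log\left(\eta(z)^k\eta(pz)^k\right) = \frac12\left(\log\eta(z) + \log\eta(pz)\right)$, the integer $k$ only serves to make the $2k$-th root single-valued, and the identity to be proved is independent of it. Thus it suffices to track $\frac12\log\eta(z)$ and $\frac12\log\eta(pz)$ under the relevant substitutions.

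For $\gamma = \abcd\in\Gamma_0(p)$ I would argue directly. Writing $w = pz$ and $\gamma^\ast = \left(\begin{smallmatrix} a & pb \\ c/p & d\end{smallmatrix}\right)$, the congruence $p\mid c$ guarantees $\gamma^\ast\in\SLZ$, and a short computation gives $p\,\gamma z = \gamma^\ast w$. Applying \eqref{etatrans} to $\eta(\gamma z)$ and to $\eta(\gamma^\ast w) = \eta(p\,\gamma z)$, and using $\sign(c/p) = \sign(c)$ together with $(c/p)w + d = cz + d$, the two automorphy factors become identical; averaging the two copies of \eqref{etatrans} with weight $\frac12$ (as dictated by $\log\eta_p = \frac12(\log\eta(z)+\log\eta(pz))$) reproduces the single factor $\frac12\sign(c)^2\log\left(\frac{cz+d}{i\sign(c)}\right)$, while the Rademacher contributions average to $\frac12(\Phi(\gamma)+\Phi(\gamma^\ast)) = \Phi_p(\gamma)$. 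This is exactly the asserted formula in the first case.

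The second case I would obtain by composing the first case with the transformation under the Fricke involution $W_p = \frac{1}{\sqrt p}\left(\begin{smallmatrix} 0 & -1 \\ p & 0\end{smallmatrix}\right)$, for which $W_p z = -1/(pz)$. Using $\eta(-1/\zeta) = \sqrt{\zeta/i}\,\eta(\zeta)$ twice and $p\cdot W_p z = -1/z$, one finds the clean law $\log\eta_p(W_p z) = \log\eta_p(z) + \frac14\log p + \frac12\log(z/i)$, consistent with $\Phi_p(W_p) = 0$. Given $\gamma = \abcd\notin\Gamma_0(p)$, I would set $\gamma' = \frac{1}{\sqrt p}\left(\begin{smallmatrix} c & d \\ -pa & -pb\end{smallmatrix}\right)$, check that $\gamma'\in\Gamma_0(p)$ (using that $\gamma$ lies in the nontrivial coset $\Gamma_0(p)W_p$, its entries are integers with $p$ dividing the lower-left one, and $\det\gamma' = ad-bc = 1$), and verify the matrix identity $\gamma = W_p\gamma'$, whence $\gamma z = W_p(\gamma' z)$ as M\"obius transformations. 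Substituting $w = \gamma' z$ into the Fricke law and then expanding $\log\eta_p(\gamma' z)$ by the already-proved first case yields $\log\eta_p(\gamma z)$ as $\log\eta_p(z)$ plus an explicit bundle of logarithmic terms plus $\frac{\pi i}{12}\Phi_p(\gamma')$.

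The remaining, genuinely delicate step is to show that the bundle of logarithmic terms produced above, namely $\frac12\sign(c')^2\log\left(\frac{c'z+d'}{i\sign(c')}\right) + \frac14\log p + \frac12\log(\gamma'z/i)$, together with the constant $\frac{\pi i}{4}\sign(-ac)$ arising from $\Phi_p(\gamma') = \Phi_p(\gamma) + 3\sign(-ac)$, reduces to $\frac12\sign(c)^2\log\left(\frac{cz+d}{i\sign(c)}\right)$. Substituting $c' = -\sqrt p\,a$, $d' = -\sqrt p\,b$ and $\gamma'z = (cz+d)/\bigl(-p(az+b)\bigr)$, the factors $az+b$ and the powers of $p$ cancel when the two half-logarithms are merged, leaving $\frac12\log\left(\frac{cz+d}{i\sign(c)}\right)$ up to a residual constant. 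That residual is a multiple of $\frac{\pi i}{4}$ coming from the principal-branch values of $\log(i\sign(c))$ and $\log\sign(a)$ and from the $2\pi i$-defect incurred in splitting the logarithm of a product, and one must verify that it equals exactly $-\frac{\pi i}{4}\sign(-ac)$, so that it cancels the constant contributed by $\Phi_p(\gamma')$. I expect this branch bookkeeping — organised by the signs of $a$ (including the degenerate case $a = 0$, where $c' = 0$ and $\sign(-ac) = 0$) and of $c$ (which never vanishes off $\Gamma_0(p)$) — to be the main obstacle, everything else being a direct appeal to \eqref{etatrans}.
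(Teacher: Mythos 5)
Your proposal is correct and takes essentially the same route as the paper's own proof: the first case by averaging two applications of \eqref{etatrans} to $\gamma$ and to $\left(\begin{smallmatrix} a & pb \\ c/p & d\end{smallmatrix}\right)$, and the second case by writing $\gamma = W_p\gamma'$ with $\gamma' = \tfrac{1}{\sqrt p}\left(\begin{smallmatrix} c & d \\ -pa & -pb\end{smallmatrix}\right)\in\Gamma_0(p)$, reducing to the first case, and isolating a residual branch identity that is precisely the paper's \eqref{signid}, with your predicted residual $-\tfrac{\pi i}{4}\sign(-ac)$ (i.e.\ $-\tfrac{\pi i}{2}\sign(-ac)$ before halving) being exactly what the paper obtains. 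The bookkeeping you defer is carried out there by the same case split you describe ($a=0$, $c=0$, $ac\neq 0$), and it is short because the quantities $\tfrac{cz+d}{i\sign(c)}$, $\tfrac{-\sqrt{p}az-\sqrt{p}b}{i\sign(-a)}$ and $\tfrac{\gamma'z}{i}$ all lie in the open right half-plane, so the logarithms combine with no $2\pi i$ defect and the residual collapses to $\log\left(\tfrac{\sign(c)}{i\sign(-a)}\right) = -\tfrac{\pi i}{2}\sign(-ac)$.
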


With this and the geometry of based edge paths, we are able to establish an interpretation of the Rademacher symbols in terms of averaging net turns and directions arising from the geometry from two based edge paths in $K$; we show the following result.
\begin{theorem}\label{thmmain}
For $\gamma=\abcd \in \Gamma_0^+(p)$, we have that
\[
\Phi_p(\gamma):=
\begin{cases} 
\frac{\tau_\alpha + \tau_\beta}{2} - 3 \left( \frac{\sigma_\alpha + \sigma_\beta}{2} \right) & \text{for } \gamma \in\Gamma_0(p),\\
\Phi_p \left( %
\frac{1}{\sqrt{p}} \left( \begin{smallmatrix} %
c   & d \\ %
-pa & -pb %
\end{smallmatrix} \right)%
\right) -  3\sign(-ac) %
& \text{for } \gamma \notin \Gamma_0(p),
\end{cases}
\]
where $\alpha = ( a_1, \ldots , a_k )$ and $\beta = (b_1 , \ldots b_m)$ are two based edge paths in $K$ made from $\gamma \in \Gamma_0(p)$ and %
$\left(\begin{smallmatrix} %
a & pb \\ %
c/p & d %
\end{smallmatrix}\right)\in\SLZ$, $\tau_\alpha$ and $\sigma_\alpha$ denotes the trace and signature of the matrix $M_\alpha$, and $\tau_\beta$ and $\sigma_\beta$ denotes the trace and signature of the matrix $M_\beta$
\[
M_\alpha = \left(\begin{smallmatrix}
a_1   &   1   &        & & &        &        \\
1     &   a_2 &   1    & & &        &        \\
      &   1   & \ddots & & &        &       \\
      &       &        & & &        &       \\
      &       &        & & &\ddots  &    1   \\
      &       &        & & &  1     &   a_k  
\end{smallmatrix}\right)
\quad \text{and} \quad %
M_\beta =\left( \begin{smallmatrix}
b_1   &   1   &        & & &        &        \\
1     &   b_2 &   1    & & &        &        \\
      &   1   & \ddots & & &        &       \\
      &       &        & & &        &       \\
      &       &        & & &\ddots  &    1   \\
      &       &        & & &  1     &   b_m  
\end{smallmatrix}\right)
\]
For $M_\alpha$ (resp. $M_\beta$) the $(i,j)^{\text{th}}$ entry is $a_i$ (resp. $b_i$) if $i = j$, and for either matrix the $(i,j)^{\text{th}}$ entry $1$ if $\abs{i-j}=1$, and $0$ otherwise.
\end{theorem}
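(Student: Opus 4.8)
The plan is to obtain Theorem~\ref{thmmain} as an essentially formal consequence of Theorem~\ref{thmPhiP} combined with the Kirby--Melvin theorem recalled above, so that no new analytic or geometric input is needed. The crucial observation is that in the first case of the definition of $\Phi_p$ in Theorem~\ref{thmPhiP} both matrices whose Rademacher symbols are being averaged already lie in $\SLZ$, and hence the geometric formula $\Phi(A)=\tau-3\sigma$ of Kirby and Melvin may be applied to each of them individually. Summing the two resulting expressions and keeping the factor $\tfrac12$ will reproduce the averaged trace-and-signature formula in the statement.

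First I would treat the case $\gamma=\abcd\in\Gamma_0(p)$. Here $\gamma\in\SLZ$, and the auxiliary matrix $M=\left(\begin{smallmatrix} a & pb \\ c/p & d\end{smallmatrix}\right)$ also lies in $\SLZ$, since its determinant equals $ad-bc=1$ and all its entries are integers because $c\equiv 0\pmod p$ forces $c/p\in\Z$. Letting $\alpha=(a_1,\dots,a_k)$ and $\beta=(b_1,\dots,b_m)$ denote the based edge paths associated to the generator decompositions of $\gamma$ and $M$ respectively, the Kirby--Melvin theorem gives
\[
\Phi(\gamma)=\tau_\alpha-3\sigma_\alpha
\qquad\text{and}\qquad
\Phi(M)=\tau_\beta-3\sigma_\beta .
\]
Substituting these into the first case of Theorem~\ref{thmPhiP} and distributing the $\tfrac12$ yields
\[
\Phi_p(\gamma)=\tfrac12\bigl[\Phi(\gamma)+\Phi(M)\bigr]
=\frac{\tau_\alpha+\tau_\beta}{2}-3\left(\frac{\sigma_\alpha+\sigma_\beta}{2}\right),
\]
which is exactly the asserted identity, with $M_\alpha$ and $M_\beta$ the tridiagonal matrices determined by $\alpha$ and $\beta$.

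For the case $\gamma\notin\Gamma_0(p)$ I would point out that the formula claimed in Theorem~\ref{thmmain} is \emph{verbatim} the second case of Theorem~\ref{thmPhiP}, so nothing new has to be proved once one checks that the recursion really reduces matters to the first case. Writing $W_p=\tfrac{1}{\sqrt p}\left(\begin{smallmatrix}0&-1\\p&0\end{smallmatrix}\right)$ for the Fricke involution, one computes $\tfrac{1}{\sqrt p}\left(\begin{smallmatrix} c & d \\ -pa & -pb\end{smallmatrix}\right)=-W_p\gamma$, and since every $\gamma\in\Gamma_0^+(p)\setminus\Gamma_0(p)$ can be written as $\gamma=W_p\delta$ with $\delta\in\Gamma_0(p)$ (using $W_p^2=-I$), this argument equals $\delta\in\Gamma_0(p)$. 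Thus $\Phi_p$ evaluated on it is already given by the first case through two based edge paths, and the correction term $-3\sign(-ac)$ is simply carried along unchanged.

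The only genuinely technical point, and where I expect to spend the most care, is the existence and well-definedness of the based edge paths $\alpha$ and $\beta$: I must guarantee that both $\gamma$ and $M$ can be placed in the normal form $S(T^{a_1}S)\cdots(T^{a_k}S)$ that the Kirby--Melvin theorem requires, and that the degenerate possibilities (for instance $c=0$, and the sign ambiguity coming from $\pm I$, to which both the Rademacher symbol and the eta-multiplier are sensitive) are resolved consistently with the conventions fixed in Section~\ref{TriInHyperbolicPlane}. Once that dictionary between matrices in $\SLZ$ and edge paths in $K$ is securely in place, the theorem follows from the two Kirby--Melvin identities above together with the linearity of trace and signature under the averaging in Theorem~\ref{thmPhiP}.
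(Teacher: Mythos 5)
Your proposal is correct and follows essentially the same route as the paper: for $\gamma\in\Gamma_0(p)$ one applies the Kirby--Melvin identity $\Phi = \tau - 3\sigma$ to the two based edge paths associated with $\gamma$ and $\left(\begin{smallmatrix} a & pb \\ c/p & d\end{smallmatrix}\right)\in\SLZ$, then invokes the averaged definition of $\Phi_p$ from Theorem~\ref{thmPhiP}, while the case $\gamma\notin\Gamma_0(p)$ is verbatim the recursion in that theorem. The paper's own proof is exactly this two-line argument, with the construction of the two paths delegated to Section~\ref{ProofGeomThm}'s preamble, just as you delegate it to the conventions of Section~\ref{TriInHyperbolicPlane}.
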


The rest of the paper is organized as follows: In Section~\ref{Symbolp}, we motivate and prove Theorem~\ref{thmPhiP}; in Section \ref{TriInHyperbolicPlane}, we recall needed geometric properties of edge paths in the triangulation $K$ from \cite{KirbyMelvin94}; and in Section~\ref{ProofGeomThm}, we show how two based edge path arise in $K$ from elements of $\Gamma_0(p)$ and prove Theorem~\ref{thmmain}.

%%%%%%%%%%%%%%%%%%%%%%%%%%%%%%%%%%%%%%%%%%%%%%%%%%%%%%%%%%%%%%%%%
\section{Modular construction of multipliers for $\Gamma_0^+(p)$} \label{Symbolp}
%%%%%%%%%%%%%%%%%%%%%%%%%%%%%%%%%%%%%%%%%%%%%%%%%%%%%%%%%%%%%%%%%
The weight $1/2$ multiplier system defined by the transformation law of the eta function is a very natural one for example in the following sense: The $24^{\rm th}$ power of $\eta$ yields the unique normalized cusp form of lowest possible weight (12 in this case) for $\SLZ$. This function is usually denoted by $\Delta$ and has the important property that it never vanishes on the upper half-plane; so taking the $24^{\rm th}$ root of $\Delta$ is well-defined, once a branch of the logarithm is chosen.

The group $\Gamma_0^+(p)$ for an odd prime $p$ is well known (see e.g. \cite{Conway}) to be generated by $\Gamma_0(p)$ and the \emph{Fricke involution} %
$ W_p = %
\frac{1}{\sqrt{p}} %
\left( \begin{smallmatrix} %
 0 & -1 \\ %
 p & 0 \end{smallmatrix}\right)$.

As an analogue of the Delta function, we take any normalized (non-trivial) cusp form for $\Gamma_0^+(p)$ of lowest possible integral weight $k$ that does not vanish in $\HH$. It follows immediately from work of Kohnen \cite{Kohnen} that such a function is necessarily given by an \emph{eta quotient}. An eta quotient of level $N$ is an expression of the form %
$ f(z) = \prod_{d|N} \eta(dz)^{r_d}$, %
where $r_d\in\Z$. In general, such a function will be a weakly holomorphic modular form\footnote{This means it is holomorphic on $\HH$, but it might have poles at the cusps.} of weight $\tfrac 12 \sum_{d|N}r_d$ for some congruence subgroup of $\SLZ$. In \cite{Newman1, Newman2}, Newman gives explicit conditions on the exponents $r_d$ that make an eta quotient a holomorphic modular form for the group $\Gamma_0(N)$. Additionally, our desired cusp form should be invariant under the Fricke involution $W_p$, which finally implies that the desired analogue of the Delta function is given by %
$ \Delta_p(z):=\eta^k(z)\eta^k(pz) $, %
where $k$ is the smallest positive even integer such that $\frac{p-1}{24}k$ is an integer. Taking the $2k^{\rm th}$ root of this yields the definition of our analogue of the eta function
\[
\eta_p(z)=\exp\left(\frac 1{2k}\log(\Delta_p(z))\right),
\]
where we choose the principal branch of the logarithm, i.e.  $\log z=\log|z|+i\arg z$
for $z\in\C\setminus\{0\}$, where we pick $\arg z\in (-\pi,\pi]$.

With this we can now prove Theorem~\ref{thmPhiP}.
\iffalse
\setcounter{theorem}{0}
\begin{theorem} \label{PhiP}
For $\gamma=\abcd\in\Gamma_0^+(p)$, let
\[
\Phi_p(\gamma):=
\begin{cases} 
\frac 12\left[\Phi(\gamma)+\Phi\left(\begin{pmatrix}
a & pb \\ c/p & d
\end{pmatrix}\right)\right] & \text{for }\gamma\in\Gamma_0(p),\\
%
\Phi_p \left( \frac{1}{\sqrt{p}}\begin{pmatrix}
c & d \\ -pa & -pb
\end{pmatrix} \right)- 3\sign(-ac) & \text{for }\gamma\notin\Gamma_0(p),
\end{cases}
\]
Then we have for all $\gamma=\abcd\in\Gamma_0^+(p)$ and all $z\in\HH$ that
\[
\log \eta_p\left(\frac{az+b}{cz+d}\right) = %
\log\eta_p(z) + \frac 12\sign(c)^2\log\left(\frac{cz+d}{i\sign(c)}\right) + \frac{\pi i}{12}\Phi_p(\gamma),
\]
where the second term is to be interpreted as $0$ if $c=0$ as in \eqref{etatrans}.
\end{theorem}
\fi
\begin{proof}[Proof of Theorem~\ref{thmPhiP}]
We first assume that $\gamma=\abcd\in\Gamma_0(p)$. Then we have 
\[\eta(p\gamma z)=\eta\left(\begin{pmatrix}
a & pb \\ c/p & d
\end{pmatrix}(pz) \right),\]
hence the claimed transformation law follows by directly plugging into \eqref{etatrans}, since the matrix $\left(\begin{smallmatrix}
a & pb \\ c/p & d
\end{smallmatrix}\right)$ is in $\SLZ$.

Now let $\gamma=\abcd\in\Gamma_0^+(p)\setminus\Gamma_0(p)$; we have $\gamma = W_p \tilde \gamma$, where 
\[
\tilde\gamma = W_p^{-1} \gamma =  \frac{1}{\sqrt{p}}\begin{pmatrix}
c & d \\ -pa & -pb
\end{pmatrix} \in \Gamma_0(p).
\]
From the transformation formula $\eta\left(-\frac 1z\right) = \sqrt{\frac{z}{i}}\eta(z)$, it is easy to see that for positive integers $m,N$, we have
\[\eta(W_N (mz))=\sqrt{\frac{(N/m)z}{i}}\eta\left(\frac{N}{m}z\right).\]
With this we find that
\begin{align*}
\log \eta_p(\gamma z) %
& = \frac 12\left[\log\eta(W_p\tilde\gamma z)+\log\eta(p\cdot W_p\tilde\gamma z) \right]\\ 
& = \frac 12\left[\log\eta(p\cdot\tilde\gamma z)+\log \eta(\tilde\gamma z)\right]+\frac 14\left[\log\left(p\frac{\tilde\gamma z}{i}\right)+\log\left(\frac{\tilde\gamma z}{i}\right)\right]\\
& = \log\eta_p( z)+\frac 12\sign(-\sqrt pa)^2\log\left(\frac{-\sqrt{p}a z-\sqrt{p}b}{i\sign(-\sqrt{p}a)}\right)+\frac{\pi i}{12}\Phi_p(\tilde\gamma) \\
& \hspace{2in} + \frac 14\left[\log\left(p\frac{\tilde\gamma z}{i}\right)+\log\left(\frac{\tilde\gamma z}{i}\right)\right].
\end{align*}
Therefore the claim follows as soon as we show that 
\begin{multline}\label{signid}
\sign(-\sqrt pa)^2\log\left(\frac{-\sqrt{p}a z-\sqrt{p}b}{i\sign(-\sqrt{p}a)}\right) + \frac 12\left[\log\left(p\frac{\tilde\gamma z}{i}\right)+\log\left(\frac{\tilde\gamma z}{i}\right)\right]\\
= \sign(c)^2\log\left(\frac{c z+d}{i\sign(c)}\right)-\frac{\pi i}{2}\sign(-ac).
\end{multline}
If $a=0$, then we must have $bc=-1$ (so, in particular, $c\neq 0$) and hence, since $\tilde\gamma\in\Gamma_0(p)$, $b=\pm 1/\sqrt{p}$. The left-hand side of \eqref{signid} then simplifies to
\begin{align*}
\log\left(\frac{c z+d}{-ipb}\right)+\frac 12\log p 
& =\log\left(\frac{c z+d}{i\sign(c)}\right)+\log\left(\frac{-\sign(c)}{pb}\right)+\frac 12\log p \\ 
& = \log\left(\frac{c z+d}{i\sign(c)}\right),
\end{align*}
which is the simplified right-hand side of \eqref{signid}. The case $c=0$ is similar. Finally, if $ac\neq 0$, then the left-hand side of \eqref{signid} becomes
\begin{align*}
& \log\left(\frac{-\sqrt{p}a z-\sqrt{p}b}{i\sign(-a)}\right)+\log\left(\frac{c z+d}{i(-\sqrt{p}a z-\sqrt{p}b)}\right)+\frac 12\log p\\
 = & \log\left(\frac{c z+d}{i\sign(c)}\right)+\log\left(\frac{\sign(c)}{i\sign(-a)}\right) %
= \log\left(\frac{c z+d}{i\sign(c)}\right)-\frac{\pi i}{2}\sign(-ac),
\end{align*}
 which completes the proof.
\end{proof}

\begin{remark*}
As shown in \cite[Theorem 12]{JST} (see also \cite[Theorem 1.3]{vPSV}), the function $\eta_p$ defined above (respectively some appropriate power of it) is exactly the function one encounters as the natural analogue of $\eta$ in the Kronecker limit formula for the group $\Gamma_0^+(p)$. 

In fact the cited works prove a Kronecker limit formula for all groups $\Gamma_0^+(N)$ where $N$ is square-free. These groups are generated by the congruence subgroup $\Gamma_0(N)$ together with the so-called \emph{Atkin-Lehner involutions}.  Thus one obtains very similar looking Rademacher symbols associated to these groups as well (see \cite{Goldstein73}). However, we decided to focus on the simplest case where $N=p$ is a prime in this work.
\end{remark*}

%%%%%%%%%%%%%%%%%%%%%%%%%%%%%%%%%%%%%%%%%%%%%%%%
\section{Based Edge Paths} \label{TriInHyperbolicPlane}

The modular group $\PSL_2(\Z)$ is a discrete subgroup of $\PSL_2(\R)$; as such, it acts by fractional linear transformations of the upper half plane model of the hyperbolic plane. With this action, much is known about the interactions of $\PSL_2(\Z)$ and the upper half-plane model $\HH$ of the hyperbolic plane. A well known triangulation of the upper half-plane $\HH$ by ideal triangles is made by successive reflections of the ideal triangle with vertices at $0$, $1$, and $\infty$ on the boundary of $\HH$. We denote this triangulation by $K$ (see \cite{HatcherBook} for some important properties of $K$). The vertices of this triangulation $K$ are $\Q \cup \{\infty\}$, where we use the common convention that the fraction notation $\frac 10$ (or occasionally $\frac{-1}{0}$) identifies the point $\infty$. An edge in $K$ joins two fractions $\frac{n_1}{d_1}$ and $\frac{n_2}{d_2}$ (in lowest terms on the boundary of $\HH$) if and only if $n_1 d_2 - n_2 d_1 = \pm 1$.  

Directed edges of $K$ can be identified with the elements of $\PSL_2(\Z)$, and the action of $\PSL_2(\Z)$ on $\HH$ induces a simplicial action on $K$, which corresponds to left multiplication on the edges of $K$. For example, the directed edge from $\infty$ to $0$ will be notated with the order pair $I= \left( \frac{1}{0} , \frac{0}{1} \right)$, and its reverse direction $\left( \frac{0}{1} , \frac{-1}{0} \right)$. We also orientate the ideal triangles of $K$ in a counterclockwise direction. 

Now we define a based (directed) edge path $\alpha$ in $K$ as a path that start with the based edge $I = E_0= \left( \frac{1}{0} , \frac{0}{1} \right)$ and the second edge in $K$ of the path $\alpha$ is $E_1 = \left( \frac{0}{1} , \frac{-1}{d_2} \right)$. We continue, if at the $m^\text{th}$ edge of $\alpha$, say $E_m = \left( \frac{ n_{m-1}}{ d_{m-1} } , \frac{n_m}{d_m} \right)$ in $K$, then the next edge would be $ E_{m+1} = \left( \frac{ n_{m}}{ d_{m} } , \frac{n_{m+1} }{d_{m+1} } \right)$ in $K$. So given a based edge path $\alpha$ in $K$, its edges yield a sequence of endpoints starting at $ \frac{1}{0} , \frac{0}{1} , \ldots, \frac{ n_{k-1}}{ d_{k-1} }$, and ending at $\frac{ n_{k}}{ d_{k} } $.  Figure \ref{ABasedPath} shows an example of a based edge path $\alpha$, which gives the sequence of endpoints $ \frac{1}{0} , \frac{0}{1} , \frac{ 1 }{ 2 }, \frac{1}{3}$, and ending at $\frac{ 3 }{ 8 } $.

Furthermore, we can make a based edge path $\alpha$ from the list of integers $\left( a_1 , \ldots , a_k \right)$ in a geometric way from the triangulation $K$. Figure \ref{ABasedPath} shows an example of a based edge path $\alpha$ from the list of integers $(-2, 1, -2)$.

%%%%%%%%%%%%%%% Picture %%%%%%%%%%%%%%%%%%%
\begin{figure}[h] 
\setlength{\unitlength}{80mm}
\begin{center}
\begin{picture}(1,0.70945946)%
    \put(0,0){\includegraphics[width=\unitlength]{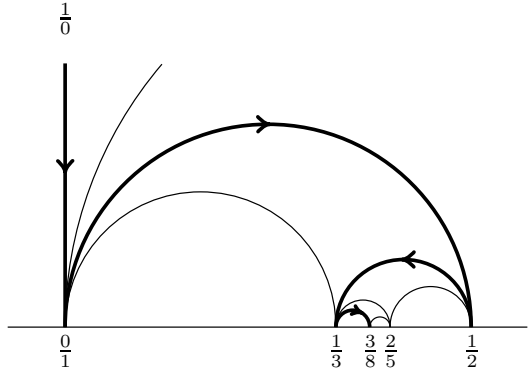}}%
    \put(0.16328589,0.57290809){\labN{$\frac{1}{0}$}}%
    \put(0.16322971,0.1352088){\labnhS{$\frac{0}{1}$}}%
    \put(0.61349183,0.13517308){\labnhS{$\frac{1}{3}$}}%
    \put(0.67011859,0.13520803){\labnhS{$\frac{3}{8}$}}%
    \put(0.70380505,0.13520885){\labnhS{$\frac{2}{5}$}}%
    \put(0.83883528,0.13511663){\labnhS{$\frac{1}{2}$}}%
\end{picture}%
\end{center}
\caption{Example of the based edge path $\alpha = (-2, 1, -2)$}
\label{ABasedPath}
\end{figure}
%%%%%%%%%%%%%%%%%%%%%%%%%%%%%%%%%%%%%%%%%%%

A geometric way that a based edge path $\alpha$ in $K$ is built from a list of integers $\left( a_1 , \ldots , a_k \right)$ is as follows: First, from $\alpha$'s starting based edge $I=E_0 = \left( \frac{1}{0}, \frac{0}{1} \right)$, if $ a_1 > 0 $, turning to the left (if $a_1 < 0$, turning to the right) through the common vertex $\frac{0}{1}$ going $\abs{a_1}$ number of triangles of $K$ to get to the next edge $E_1 = \left( \frac{0}{1} , \frac{-1}{d_2} \right)$ of the based path $\alpha$. Now continuing (for $j \geq 2$), from $E_{j-1}= \left( \frac{ n_{j-1} }{ d_{j-1} } , \frac{ n_{j} }{ d_{j} } \right)$ turning to the left, if $a_{j} > 0 $, (to the right, if $a_{j} < 0 $) through the common vertex $\frac{ n_{j} }{ d_{j} }$ going $\abs{a_{j}}$ number of triangles of $K$ to get to the next edge $E_j = \left( \frac{ n_{j} }{ d_{j} } , \frac{ n_{j+1} }{ d_{j+1}  } \right)$, where the signs of $n_{j+1}$ and $d_{j+1}$ are chosen so that $n_j d_{j+1} - n_{j+1} d_j = 1$.

In fact, we can obtain the integer for the direction and number of triangles of $K$ to turn through from the three endpoints from one edge to the next edge in the path.  The proof of Lemma 1.9 in \cite{KirbyMelvin94} shows that if the three consecutive endpoints of the edge $E_{j-1}$ to the edge $E_j$ are $\frac{ n_{j-1} }{ d_{j-1} } , \frac{ n_{j} }{ d_{j} },  \frac{ n_{j+1} }{ d_{j+1} } $ then the integer $a_j$ for the direction and number of triangles of $K$ to turn through can be found via the identities
\[
\abs{a_j} = \abs{ n_{j-1} d_{j+1} - n_{j+1} d_{j-1} },\quad\text{and}\quad \sign(a_j)=\sign\left(n_jd_{j+1}-n_{j+1}d_j\right).
\]
%and if $\frac{ n_{j} }{ d_{j} } >  \frac{ n_{j+1} }{ d_{j+1} } $ then $a_j > 0 $ and if  $\frac{ n_{j+1} }{ d_{j+1} } >  \frac{ n_{j} }{ d_{j} } $ then $a_j < 0$. 

This relationship between the endpoints of consecutive edges and the geometry of turns in a based path can establish the next lemma (from \cite{KirbyMelvin94}, see Lemma 1.9), which relates the final edge (in reverse direction to a matrix in $\PSL_2(\Z)$) of a based edge path $\alpha$ to a product in terms of the standard generators of the modular group via the based edge path $\alpha = \left( a_1 , \ldots , a_k \right)$. 

\begin{lemma*}[\cite{KirbyMelvin94}] \label{PathAsMatrix}
Let $\alpha = ( a_1 , \ldots , a_k)$ be a based edge path in $K$ whose final (directed) edge goes from $\frac{b}{d}$ to $\frac{a}{c}$ where $ ad - cb = 1$. As elements of $\PSL_2(\Z)$, $
\abcd = S(T^{a_1} S) \cdots (T^{a_k} S)$, where $S = \left( \begin{smallmatrix} 0 &  -1 \\ 1 & 0 \end{smallmatrix} \right)$ and $T =  \left( \begin{smallmatrix} 1 &  1 \\ 0 & 1 \end{smallmatrix} \right)$.
\end{lemma*}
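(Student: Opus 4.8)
The plan is to set up a dictionary between the directed edges of $K$ and the elements of $\PSL_2(\Z)$, and then to translate the geometric ``turn'' that builds $\alpha$ into right multiplication by the generators $S$ and $T$. Since $\PSL_2(\Z)$ acts simply transitively on the directed edges of $K$ by left multiplication, I can identify the based edge $I=E_0=\left(\frac10,\frac01\right)$ with the identity and a general directed edge $g\cdot I$ with $g$. Equivalently, the edge running from $\frac bd$ to $\frac ac$ with $ad-cb=1$ is the image of the \emph{reversed} based edge under $\abcd$, because $\abcd\cdot\infty=\frac ac$ and $\abcd\cdot 0=\frac bd$; this is what makes the matrix $\abcd$ (rather than its reverse) the one attached to the final edge. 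Under this dictionary left multiplication is the simplicial action, but the two moves I actually need act on the right.

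First I would record two elementary facts about right multiplication, each read off from the based edge and then propagated by equivariance. Because $S\cdot I$ is exactly the reverse $\left(\frac01,\frac{-1}0\right)$ of $I$, right multiplication $g\mapsto gS$ reverses any directed edge, swapping its two endpoints. Because $T^a\cdot I=(\infty,a)$, right multiplication $g\mapsto gT^a$ fixes the initial vertex $g\infty$ and slides the terminal vertex through the fan of $a$ triangles hinged at $g\infty$, the sign of $a$ fixing the direction of rotation. These are the primitive operations out of which every turn is assembled.

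The core step is to show that the passage from $E_{j-1}$ to $E_j$ corresponds to right multiplication by $ST^{a_j}$, i.e.\ $g_j=g_{j-1}ST^{a_j}$. Turning at the \emph{terminal} vertex of $E_{j-1}$ cannot be done by $T^{a_j}$ alone (which rotates about the initial vertex), so one first applies $S$ to reverse $E_{j-1}$, making its terminal vertex the initial vertex of $\overline{E_{j-1}}$; then $T^{a_j}$ rotates about that shared vertex through $|a_j|$ triangles; the resulting edge has the shared vertex as its initial vertex, which is precisely what the path definition requires of $E_j$. Here I must check that the integer read off geometrically, with $|a_j|=|n_{j-1}d_{j+1}-n_{j+1}d_{j-1}|$ and $\sign(a_j)=\sign(n_jd_{j+1}-n_{j+1}d_j)$ as recalled above, is exactly the exponent of $T$ and not its negative, and that ``left for $a_j>0$'' agrees with the direction in which $T$ turns the fan. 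Reconciling this orientation-and-count compatibility is where the real bookkeeping lies and is the main obstacle; the $\PSL_2(\Z)$ sign ambiguities (the freedom $\frac nd=\frac{-n}{-d}$ used to keep every determinant equal to $+1$, as in the $\frac{-1}0$ convention) must be tracked alongside it.

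Finally I would close by induction on the length $k$. The base case $k=0$ is immediate: the reversed based edge corresponds to $S$, which is the empty product $S(T^{a_1}S)\cdots(T^{a_k}S)$ when $k=0$. For the inductive step, iterating $g_j=g_{j-1}ST^{a_j}$ from $g_0=\mathrm{Id}$ gives $g_k=(ST^{a_1})(ST^{a_2})\cdots(ST^{a_k})$ for the edge $E_k$ itself; reversing once more by right multiplication by $S$ yields the matrix attached to the final edge taken in reverse, namely $g_kS=S(T^{a_1}S)\cdots(T^{a_k}S)$. This matrix is exactly $\abcd$, since it sends the based edge to $\left(\frac ac,\frac bd\right)$ and satisfies $ad-cb=1$, which is the claimed identity.
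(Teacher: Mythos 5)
The paper does not actually prove this lemma: it is imported verbatim from Kirby and Melvin \cite{KirbyMelvin94} (their Lemma 1.9), so there is no internal proof to measure you against; your proposal has to stand on its own, and in structure it does. Your dictionary is the right one: simple transitivity of the left action identifies the directed edge $g\cdot I$ with $g\in\PSL_2(\Z)$; since $S\cdot I=\left(\tfrac01,\tfrac{-1}{0}\right)$, right multiplication by $S$ reverses edges; since $T^{a}\cdot I=\left(\tfrac10,\tfrac a1\right)$, right multiplication by $T^{a}$ fixes the initial vertex and pivots the terminal vertex through $\abs{a}$ triangles of the fan there. The recursion $g_j=g_{j-1}ST^{a_j}$, the induction from $g_0=\mathrm{Id}$, the final reversal $g_kS$, and the re-bracketing $S T^{a_1}S\cdots T^{a_k}S=S(T^{a_1}S)\cdots(T^{a_k}S)$ are all correct, as is the base case $k=0$ (where the convention $\tfrac{-1}{0}$ makes $\abcd=S$ with $ad-cb=+1$).

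The one step you name but defer --- that a \emph{left} turn with $a_j>0$ corresponds to $T^{a_j}$ rather than $T^{-a_j}$ --- is genuinely the crux, and you should write it out, because the paper's verbal convention is easy to get backwards: ``left'' here means the edge sweeps \emph{counterclockwise} about the pivot, which is the opposite of a left turn in the direction of travel along the path. The check costs two lines once you use the equivariance you already invoke (M\"obius transformations preserve orientation, so it suffices to calibrate at the base edge): for $a>0$, $ST^{a}\cdot I=\left(\tfrac01,\tfrac{-1}{a}\right)$, and the edge from $0$ to $-\tfrac1a$ is reached from the base edge by rotating counterclockwise about $0$ through the $a$ triangles with vertices $\{0,\infty,-1\},\{0,-1,-\tfrac12\},\dots$, so positive exponents are indeed counterclockwise (``left'') turns. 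Alternatively, calibrate against the paper's own Figure \ref{ABasedPath}: for $\alpha=(-2,1,-2)$ one computes $S(T^{-2}S)(TS)(T^{-2}S)=\left(\begin{smallmatrix}3&1\\8&3\end{smallmatrix}\right)$, whose final edge runs from $\tfrac13$ to $\tfrac38$, exactly as pictured. With that verification made explicit, your argument is a complete and correct proof, and it is in essence the same inductive translation used in the cited source.
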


\section{Two based paths for elements of the congruence subgroup $\Gamma_0(p)$} \label{ProofGeomThm}

Like in section \ref{Symbolp}, we set $p$ to be an odd prime. We start by considering elements of the congruence subgroup $\Gamma_0(p)$. When viewing an element from $\Gamma_0(p)$ as an edge in $K$, the endpoints can be multiplied by $p$ and produce another edge of $K$. It is important to keep in mind that multiplication by $p$ as a M\"{o}bius transformation does not map $K$ to itself. However, it does map the base edge $\left( \frac{1}{0}, \frac{0}{1} \right)$ to itself and maps edges from elements of $\Gamma_0(p)$ to edges in $K$. With $\gamma = \abcd \in \Gamma_0(p)$, (that is, $ c\equiv 0\pmod p $), we see that conjugating $\gamma$ by the M\"{o}bius transformation multiplication by $p$ is also a edge in $K$; that is,  
\[
\begin{pmatrix} %
p/\sqrt{p} & 0 \\ %
0 &  1/\sqrt{p}  %
\end{pmatrix} %
\begin{pmatrix} %
a   & b \\ %
c   & d  %
\end{pmatrix} %
\begin{pmatrix} %
1 /\sqrt{p} & 0 \\ %
0           & p/\sqrt{p}  %
\end{pmatrix} %
= %
\begin{pmatrix} %
a   & pb \\ %
c/p & d %
\end{pmatrix}.
\]
We make two based paths $\alpha= (a_1, \ldots , a_k)$ and $\beta = (b_1 , \ldots , b_m) $ so that 
\[
\bigabcd =  S(T^{a_1} S) \cdots (T^{a_k} S) %
\quad \text{and} \quad %
\begin{pmatrix} %
a   & pb \\ %
c/p & d %
\end{pmatrix} = S(T^{b_1} S) \cdots (T^{b_m} S).
\]
These two edges in $K$ are the two ending edges (in reverse direction) for the based paths $\alpha$ and $\beta$. We refer to the based edge paths $\alpha$ and $\beta$ as being made from $\gamma = \abcd \in \Gamma_0(p)$.  

%For example, given $ \left( \begin{smallmatrix} 3 &  -1 \\ 7 & -2 \end{smallmatrix} \right) \in \Gamma_0(7)$, we consider two edges in $K$: $\left( \frac{3}{7} , \frac{-1}{-2} \right)$ and its endpoints can be multiplied by $7$ producing the edge $\left( \frac{3}{1} , \frac{-7}{-2} \right)$. The based edge paths $\alpha = ( -2 , 3 )$ and $\beta = (-1, -2, -2, -2, 1, 1 )$ end with those edges (in reverse direction), respectively; as elements of $\PSL_2(\Z)$, we have %
%
%\begin{align*}
%\left( \begin{smallmatrix} %
%3   & - 1 \\ %
%7  &  -2  %
%\end{smallmatrix} \right)  & =  S(T^{-2} S) (T^{3} S),  \\%
%\left( \begin{smallmatrix} %
%3   & -7 \\ %
%1 & - 2  %
%\end{smallmatrix} \right) & = S(T^{-1} S)(T^{-2} S)(T^{-2} S)(T^{-2} S)(T^{1} S)(T^{1} S).
%\end{align*}

\begin{proof}[Proof of Theorem~\ref{thmmain}]
Let $\gamma = \abcd \in \Gamma_0(p)$, (that is, $ c\equiv 0\pmod p $); we make two based edge paths $\alpha= (a_1, \ldots , a_k)$ and $\beta = (b_1 , \ldots , b_m) $ where the ending edges (in reverse direction) are $\abcd$ and $\left( \begin{smallmatrix} %
a   & pb \\ %
c/p & d %
\end{smallmatrix} \right)$, respectively.  

Now we apply the Theorem of Kirby and Melvin (in the introduction) to compute $\Phi$ of these two matrices using the geometric formulation of Rademacher symbol 
\[
\Phi(\gamma) = \tau_\alpha - 3 \sigma_\alpha \quad \text{and} \quad  \Phi \left( \begin{pmatrix} %
a   & pb \\ %
c/p & d %
\end{pmatrix} \right) = \tau_\beta - 3 \sigma_\beta,
\]
where $\tau_\alpha$, $\sigma_\alpha$, $\tau_\beta$, $\sigma_\beta$ are as defined in the statement of Theorem~\ref{thmmain}.  Now the result follow by applying Theorem~\ref{thmPhiP}.  
\end{proof}

%%%%%%%%%%%%%%%%%%%%%%%%%%%
%%%%%%%%%%%%%%%%%%%%%%%%%%%

%%%%%%%%%%%%%%%%%%%%%%%%%%%

\end{document}